\newtheorem{thm}{Theorem}[section]
\newtheorem{lem}[thm]{Lemma}
\newtheorem{prop}[thm]{Proposition}
\theoremstyle{definition}
\newtheorem{rem}[thm]{Remark}
\numberwithin{equation}{section}
\newcommand{\Z}{\mathbf{Z}}
\newcommand{\C}{\mathbf{C}}
\newcommand{\Q}{\mathbf{Q}}
\newcommand{\GL}{\textnormal{GL}}
\newcommand{\PGL}{\textnormal{PGL}}
\newcommand{\Aut}{\mbox{Aut}}
\newcommand{\Cre}{\textnormal{Cr}}
\begin{document}
\title{Nonlinearity of some subgroups of the planar Cremona group}
\author{Yves Cornulier}%
\address{Laboratoire de Math\'ematiques\\
B\^atiment 425, Universit\'e Paris-Sud 11\\
91405 Orsay\\FRANCE}
\email{yves.cornulier@math.u-psud.fr}

\date{February 22, 2013}

\begin{abstract}We give some examples of non-nilpotent locally nilpotent, and hence nonlinear subgroups of the planar Cremona group.
\end{abstract}

\maketitle

\section{Introduction}

Let $K$ be a field. The planar {\em Cremona group} $\Cre_2(K)$ of $K$ is defined as the group of birational transformations of the 2-dimensional $K$-affine space. It can also be described as the group of $K$-automorphisms of the field of rational functions $K(t_1,t_2)$. More generally, one defines $\Cre_d(K)$.

We provide here two observations about the planar Cremona group. The first is an example of a non-linear finitely generated subgroup of $\Cre_2(\C)$. The existence of such a subgroup was known to some experts: for instance it follows from an unpublished construction of S.~Cantat (using superrigidity of lattices); our example has the additional feature of being 3-solvable. Its non-linearity follows from the fact it contains nilpotent subgroups of arbitrary large nilpotency length. We also show there that $\Cre_2(K)$ has no nontrivial linear representation over any field, extending a result of Cerveau and D\'eserti (all representations below are assumed finite-dimensional).

We end this short introduction by a few questions.

\begin{enumerate}
\item (Cantat) for $d\ge 2$, and any field $K$, is $\Cre_d(K)$ locally residually finite (i.e.\ is every finitely generated subgroup residually finite)? 

\item Does there exist a finitely generated subgroup of $\Aut(\C^2)$ with no faithful linear representation (see Remark \ref{autc2})?

\item Does there exist $d$ and $K$ and an infinite, finitely generated subgroup of $\Cre_d(K)$ such that every linear representation of $\Gamma$ over any field has a finite image

\end{enumerate}

\medskip

\noindent{\bf Acknowledgements.} I thank Serge Cantat and Julie Deserti for useful discussions.

\section{A nonlinear subgroup of the Cremona group}\label{nonlinear}

We provide in this section an example of a finitely generated subgroup of $\Cre_2(\C)$ that is not linear over any field. It is 3-solvable and actually lies in the Jonqui\`eres subgroup, that is, the group of birational transformations preserving the partition of $\C^2$ by horizontal lines. 

If $f\in K(X)$ and $g\in K(X)^\times$, define $\alpha_f,\mu_g\in\Cre_2(K)$ by
$$\alpha_f(x,y)=(x,y+f(x));\quad \mu_g(x,y)=(x,yg(x)).$$
We have 
$$\alpha_{f+f'}=\alpha_f\alpha_{f'};\quad\mu_{gg'}=\mu_g\mu_{g'};\quad\mu_g\alpha_f\mu_{g}^{-1}=\alpha_{fg}.$$
Also for $t\in K$, define $s_t\in\Cre_2(K)$ by $(x,y)=(x+t,y)$, so that
$$s_t\alpha_{f(X)}s_{t}^{-1}=\alpha_{f(X-t)};\quad s_t\mu_{g(X)}s_t^{-1}=\mu_{g(X-t)}.$$

Consider the subgroup $\Gamma_n$ of $\Cre_2(K)$ generated by $s_1$ and $\alpha_{X^n}$ ($n\ge 0$). 

\begin{lem}
The group $\Gamma_n$ is nilpotent of class at most $n+1$; moreover if $K$ has characteristic zero the nilpotency length of $\Gamma_n$ is exactly $n+1$, and $\Gamma_n$ is torsion-free.
\end{lem}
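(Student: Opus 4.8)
The plan is to present $\Gamma_n$ as a metabelian semidirect product and to read its entire lower central series off a single linear operator on polynomials. First I would set $A$ to be the subgroup generated by all conjugates $s_1^k\alpha_{X^n}s_1^{-k}$, which by iterating $s_t\alpha_{f(X)}s_t^{-1}=\alpha_{f(X-t)}$ equal $\alpha_{(X-k)^n}$. Since the $\alpha_f$ commute, $A$ is abelian; being generated by the $s_1$-conjugates of $\alpha_{X^n}$ it is normal, and comparing the $x$-translation part shows $\langle s_1\rangle\cap A=1$, so $\Gamma_n=\langle s_1\rangle\ltimes A$ with $\Gamma_n/A\cong\Z$. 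Writing $M\subseteq K[X]$ for the additive subgroup generated by $\{(X-k)^n:k\in\Z\}$, the map $p\mapsto\alpha_p$ is an isomorphism $(M,+)\to A$, and conjugation by $s_1$ becomes the shift $\tau\colon p(X)\mapsto p(X-1)$. The key computation is then that the commutator corresponds to the difference operator: $[s_1,\alpha_p]=\alpha_{Dp}$, where $Dp(X)=p(X-1)-p(X)=(\tau-\mathrm{id})p$.

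Next I would identify the lower central series. Because $\Gamma_n/A$ is abelian, $\gamma_2\subseteq A$; and since $\tau\equiv\mathrm{id}\pmod{DM}$, the quotient $\Gamma_n/\alpha(DM)$ is abelian, giving $\gamma_2=\alpha(DM)$. I would then prove by induction that $\gamma_{i+1}(\Gamma_n)=\alpha(D^iM)$ for all $i\ge1$. The inductive step uses the standard fact that $\gamma_{i+1}=[\Gamma_n,\gamma_i]$ is generated, as a normal subgroup, by the commutators $[s_1,x]$ and $[\alpha_{X^n},x]$ with $x\in\gamma_i$; as $\gamma_i\subseteq A$ and $A$ is abelian the brackets $[\alpha_{X^n},x]$ vanish, while $[s_1,\alpha_q]=\alpha_{Dq}$ ranges exactly over $\alpha(D\cdot D^{i-1}M)=\alpha(D^iM)$ as $q$ runs through $D^{i-1}M$. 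This bookkeeping — checking that no further commutators contribute and that one gets all of $D^iM$, not merely a subset — is the main point requiring care, though it is routine once the abelianness of $A$ is exploited.

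Finally I would close the three remaining claims by a degree count. For any field, $D$ strictly lowers degree (the top terms of $p(X-1)$ and $p(X)$ cancel), so since every element of $M$ has degree at most $n$ one gets $D^{n+1}M=0$, whence $\gamma_{n+2}=1$ and the class is at most $n+1$. In characteristic zero $D$ lowers degree by exactly $1$, and a direct evaluation gives $D^nX^n=(-1)^n n!\neq0$, so $\gamma_{n+1}=\alpha(D^nM)$ contains $\alpha_{(-1)^nn!}\neq\mathrm{id}$ and the class is exactly $n+1$. Torsion-freeness is then immediate from the semidirect structure: an element $s_1^j\alpha_p$ with $j\neq0$ has infinite order via its image in $\Gamma_n/A\cong\Z$, and for $j=0$ it lies in $A\cong(M,+)$, which is torsion-free as a subgroup of the additive group of $K[X]$ over a field of characteristic zero; hence only the identity has finite order.
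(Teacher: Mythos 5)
Your proof is correct, and its computational kernel coincides with the paper's: both identify commutation with $s_1$ as the finite difference operator ($\Delta P(X)=P(X-1)-P(X)$, your $D$), use the fact that it strictly lowers degree to bound the nilpotency class, use $\Delta^n X^n=(-1)^n n!\neq 0$ in characteristic zero for the lower bound, and derive torsion-freeness from the semidirect structure over the torsion-free additive group of polynomials. Where you diverge is in the packaging. The paper never touches the lower central series of $\Gamma_n$ itself: for the upper bound it embeds $\Gamma_n$ in the larger group $R_n$ generated by $s_1$ and \emph{all} $\alpha_P$ with $\deg P\le n$, and observes that the chain of subgroups $A_k=\{\alpha_P:\deg P\le k\}$ forms a central series of $R_n$, so $R_n$ (hence $\Gamma_n$) has class at most $n+1$; for the lower bound it simply exhibits the single nonvanishing iterated commutator $[s_1,[s_1,\cdots,[s_1,\alpha_{X^n}]\cdots]]=\alpha_{\Delta^n X^n}$. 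You instead stay inside $\Gamma_n$, pin down the normal abelian subgroup $\alpha(M)$ with $M$ the $\Z$-span of the $(X-k)^n$, and compute the lower central series exactly: $\gamma_{i+1}=\alpha(D^iM)$. This costs you the normal-generation bookkeeping (which you rightly flag, and which does go through since $\gamma_i\subseteq A$ and $D^{i-1}M$ is shift-stable), but buys strictly more information --- the exact terms $\gamma_i$ rather than a sandwich of bounds --- and avoids the auxiliary overgroup. The paper's version is shorter; yours is more self-contained and makes the torsion-freeness claim, which the paper dismisses as clear, genuinely immediate from the splitting $\Gamma_n\cong\Z\ltimes M$. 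Note also that both lower bounds work equally well in characteristic $p>n$, since $n!\neq 0$ there, as the paper records in passing.
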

\begin{proof}
Consider the largest group $R_n$, generated by $s_1$ and by the abelian subgroup $A_n$ consisting of all $\alpha_{P}$, where $P$ ranges over polynomials of degree at most $n$. Then $A_n$ is normalized by $s_1$ and $[s_1,A_n]\subset A_{n-1}$ for all $n\ge 1$, while $A_0=\{1\}$. Therefore $R_n$ is nilpotent of class at most $n+1$, and therefore so is $\Gamma_n$. Conversely, the $n$-iterated group commutator $[s_1,[s_1,\cdots,[s_1,\alpha_{X^n}]\cdots]]$ is equal to $\alpha_{\Delta^nX^n}$, where $\Delta$ is the discrete differential operator $\Delta P(X)=-P(X)+P(X-1)$. So if $K$ has characteristic zero (or $p>n$) then $\Delta^nX^n\neq 0$ and $\Gamma_n$ is not $n$-nilpotent. In this case it is also clear that $R_n$ is torsion-free.
\end{proof}

Now assume that $K$ has characteristic zero and consider the group $G\subset\Cre_2(\Q)\subset\Cre_2(K)$ generated by $\{s_1,\alpha_1,\mu_X\}$.

\begin{prop}
The finitely generated group $G\subset\Cre_2(\Q)$ is solvable of length three; it is not linear over any field.
\end{prop}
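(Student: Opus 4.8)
The plan is to prove two things: that $G$ is solvable of length exactly three, and that it contains the groups $\Gamma_n$ from the Lemma for all $n$, which forces nonlinearity.

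First I would analyze the structure of $G$. The three generators are $s_1$ (translation in $x$), $\alpha_1$ (the unipotent $y\mapsto y+1$), and $\mu_X$ (the map $y\mapsto yX$). The key conjugation relations are already recorded: $\mu_g\alpha_f\mu_g^{-1}=\alpha_{fg}$, so conjugating $\alpha_1$ by powers of $\mu_X$ produces $\alpha_{X^k}$ for all $k\in\Z$, and conjugating by $s_1$ shifts the argument, $s_t\alpha_{f(X)}s_t^{-1}=\alpha_{f(X-t)}$. From this I expect the abelian subgroup $A$ of all $\alpha_f$ with $f\in K[X,X^{-1}]$ (Laurent polynomials) to be normal in $G$, with the quotient generated by the images of $s_1$ and $\mu_X$. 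I would identify the quotient $G/A$: since $s_1$ and $\mu_X$ both fix the first coordinate's translation/scaling structure on $y$ and commute in their action modulo $A$ (as $s_t\mu_{g(X)}s_t^{-1}=\mu_{g(X-t)}$, the commutator $[s_1,\mu_X]=\mu_{X}\mu_{X-1}^{-1}=\mu_{X/(X-1)}$ lies in the group generated by the $\mu_g$), the plan is to show $G/A$ is metabelian, hence $G$ is solvable of length at most three. To get that the length is exactly three (not less), I would exhibit a nonabelian nilpotent, or simply a non-metabelian, subquotient; the cleanest route is to note that the derived series does not terminate earlier because $G$ contains the non-metabelian subgroups described next.

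The heart of the nonlinearity argument is to show that $G$ contains a copy of $\Gamma_n$ for every $n\ge 0$. Indeed, $\mu_X^n\alpha_1\mu_X^{-n}=\alpha_{X^n}$ lies in $G$, and $s_1\in G$, so the subgroup $\langle s_1,\alpha_{X^n}\rangle=\Gamma_n$ sits inside $G$ for each $n$. By the Lemma, in characteristic zero $\Gamma_n$ is nilpotent of class exactly $n+1$. Hence $G$ contains nilpotent subgroups of arbitrarily large nilpotency class. The final step invokes the standard fact that a finitely generated linear group over any field is, by a theorem of Mal'cev, residually finite, and more to the point satisfies a uniform bound on the nilpotency class of its nilpotent subgroups (equivalently, a linear group satisfies an identity forcing such a bound, or one appeals to the Jordan-type / Tits-alternative constraints). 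I would phrase this via the cleanest available statement: a finitely generated linear group cannot contain nilpotent subgroups of unbounded class, so the presence of all the $\Gamma_n$ inside the fixed finitely generated group $G$ contradicts linearity over any field.

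The main obstacle I anticipate is pinning down precisely which theorem about linear groups to cite for the bounded-nilpotency-class conclusion, and making sure it applies over \emph{all} fields (arbitrary characteristic of the target representation, not just the source field $\Q$). The relevant input is that a finitely generated subgroup of $\GL_d(L)$ over any field $L$ has nilpotent subgroups of class bounded in terms of $d$ alone; since $G$ is a single fixed group, any faithful representation would have some fixed dimension $d$, contradicting the existence of $\Gamma_{n}\le G$ for $n$ with $n+1>$ that bound. Secondary care is needed to verify rigorously that $G/A$ is metabelian and that $A$ is genuinely normal (i.e.\ that conjugating a Laurent-polynomial $\alpha_f$ by $\mu_X$ and $s_1$ stays within Laurent polynomials), but these are direct computations using the displayed relations and should present no real difficulty.
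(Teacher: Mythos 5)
Your proposal follows the paper's skeleton for the containment step (using $\mu_X^n\alpha_1\mu_X^{-n}=\alpha_{X^n}$ to get $\Gamma_n\le G$), but the step that actually delivers nonlinearity rests on a false statement. It is \emph{not} true that nilpotent subgroups of $\GL_d(L)$, or of a finitely generated subgroup thereof, have nilpotency class bounded in terms of $d$ alone: the dihedral group of order $2^{k+1}$ is finite (hence finitely generated), nilpotent of class $k$, and embeds in $O(2)\subset\GL_2(\R)$ for every $k$, so already $\GL_2$ contains finitely generated nilpotent subgroups of unbounded class. No identity, residual finiteness, or Jordan/Tits-type statement gives the bound you want without a torsion hypothesis; in particular an abelian subgroup of bounded index (which Jordan-type theorems do provide) does not bound the class, as the same dihedral examples show. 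The correct statement, and the one the paper uses, concerns \emph{torsion-free} nilpotent subgroups: in characteristic $p>0$ a torsion-free nilpotent linear group is abelian, and in characteristic zero one uses that a finite-index subgroup of a torsion-free nilpotent group of class $n+1$ still has class exactly $n+1$, then passes to the Zariski closure to produce a nilpotent Lie subalgebra of $\mathfrak{gl}_d(\C)$ of class $n+1$, forcing $n+1\le d^2$. This is precisely why the Lemma records that $\Gamma_n$ is torsion-free; your argument never invokes torsion-freeness and cannot go through without it (or without a substantially harder Selberg-plus-Jordan argument whose bound depends on more than $d$).

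Two further gaps. First, your route to ``solvable of length exactly three'' fails: the subgroups $\Gamma_n$ are not non-metabelian. Each $\Gamma_n$ lies in $\langle s_1\rangle\ltimes A_n$ with $A_n$ abelian, so it is abelian-by-cyclic, hence metabelian for every $n$ (large nilpotency class is perfectly compatible with derived length $2$). So containing all the $\Gamma_n$ does not rule out $G$ being metabelian. The paper instead gets this either by direct computation or, more elegantly, from nonlinearity itself combined with Remeslennikov's theorem that finitely generated torsion-free metabelian groups are linear in characteristic zero. Second, your candidate normal subgroup $A=\{\alpha_f:f\in K[X,X^{-1}]\}$ is not normal in $G$: one has $\alpha_{1/X}=\mu_X^{-1}\alpha_1\mu_X\in G$, but $s_1\alpha_{1/X}s_1^{-1}=\alpha_{1/(X-1)}\notin A$, since $1/(X-1)$ is not a Laurent polynomial; so the verification you dismissed as routine actually fails. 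This last point is easily repaired --- take $A$ to consist of all $\alpha_f$ with $f\in K(X)$ (or with poles only at integer points), which is normalized by $s_1$, all $\mu_g$, and all $\alpha_{f'}$ --- after which the bound ``solvable of length at most three'' goes through exactly as in the paper.
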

\begin{proof}
From the conjugation relations above it is clear that the subgroup generated by $s_1$, all $\alpha_f$ and $\mu_g$, is solvable of length at most three. If we restrict to those $g$ of the form $\prod_{n\in\Z}(X-n)^{k_n}$ (where $(k_n)$ is finitely supported), we obtain a subgroup containing $\Gamma$, that is clearly torsion-free.

Since $\mu_X^n\alpha_1\mu_{X}^{-n}=\alpha_{X^n}$, we see that $G$ contains $\Gamma_n$ for all $n$, which is nilpotent of length exactly $n+1$. Therefore it has no linear representation over any field.

[Sketch of proof of the latter (well-known) result: in characteristic $p>0$, any torsion-free nilpotent subgroup is abelian, so this discards this case. Otherwise in characteristic zero, since any finite index subgroup of a torsion-free nilpotent group of nilpotency length $n+1$ still has nilpotency length $n+1$, the existence of a linear representation of $G$ into $\GL_d(\C)$ implies the existence of a Lie subalgebra of $\mathfrak{gl}_d(\C)$ of nilpotency length $n+1$ for all $n$; this necessarily implies $n+1\le d^2$, and since $n$ is unbounded this is a contradiction.]

The fact that $G$ is not 2-solvable (=metabelian) can be checked by hand, but also follows from the fact that every torsion-free finitely generated metabelian group is linear over a field of characteristic zero \cite{Rem}.
\end{proof}

We easily see $\Gamma_n\subset\Gamma_{n+1}$ for all $n$. Denoting $\Gamma_\infty=\bigcup\Gamma_n$, we see that $\Gamma_\infty$ is locally nilpotent (that is, all its finitely generated subgroups are nilpotent) and the above argument works for it. Since $\Gamma_\infty$ is contained in $\Aut(\C^2)$, we also get:

\begin{prop}
$\Gamma_\infty$, and hence $\Aut(\C^2)$ is not linear over any field.
\end{prop}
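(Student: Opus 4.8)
The plan is to reduce the statement to the already-established properties of the groups $\Gamma_n$, exploiting the fact that nonlinearity is inherited by overgroups while local nilpotency is a property transparently visible on finitely generated subgroups. First I would verify carefully the claimed inclusion $\Gamma_n\subset\Gamma_{n+1}$, which the text asserts as ``easily seen'': the generators of $\Gamma_n$ are $s_1$ and $\alpha_{X^n}$, and since $\alpha_{X^n}$ lies in the abelian subgroup $A_{n+1}$ (consisting of all $\alpha_P$ with $\deg P\le n+1$), together with $s_1$ it is indeed a subset of the generating set for $\Gamma_{n+1}$. Hence the union $\Gamma_\infty=\bigcup_n\Gamma_n$ is a well-defined increasing union of nilpotent groups.

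Next I would establish that $\Gamma_\infty$ is locally nilpotent. The key observation is that any finite subset $S$ of $\Gamma_\infty$ is contained in some single $\Gamma_n$ (by the increasing union property, each element of $S$ lies in some $\Gamma_{n_i}$, so $S\subset\Gamma_{\max_i n_i}$). Since each $\Gamma_n$ is nilpotent by the Lemma, the subgroup $\langle S\rangle$ is nilpotent, which is precisely local nilpotency. I would then argue nonlinearity exactly as in the Proposition: $\Gamma_\infty$ contains $\Gamma_n$ for every $n$, and in characteristic zero $\Gamma_n$ is torsion-free nilpotent of length exactly $n+1$, so the same dimension-counting obstruction (a faithful representation into $\GL_d$ would force Lie subalgebras of $\mathfrak{gl}_d(\C)$ of unbounded nilpotency length, contradicting $n+1\le d^2$) rules out any faithful linear representation over any field, with the characteristic-$p$ case discarded by torsion-freeness. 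The only subtlety here is that a linear representation of $\Gamma_\infty$ restricts to one of each $\Gamma_n$, and faithfulness on $\Gamma_\infty$ forces faithfulness on each $\Gamma_n$, so the obstruction propagates.

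Finally I would upgrade from $\Gamma_\infty$ to $\Aut(\C^2)$. The crucial point is that each generator $s_1$ and each $\alpha_{X^n}$ is a polynomial automorphism of $\C^2$ with polynomial inverse, hence lies in $\Aut(\C^2)$; therefore $\Gamma_\infty\subset\Aut(\C^2)$. Nonlinearity transfers upward: if $\Aut(\C^2)$ admitted a faithful finite-dimensional representation over some field, its restriction to the subgroup $\Gamma_\infty$ would be a faithful representation of $\Gamma_\infty$, contradicting the previous step. I expect the main obstacle to be purely expository rather than mathematical, namely stating the dimension-counting nonlinearity criterion cleanly enough that it applies verbatim to $\Gamma_\infty$; since the Proposition already records this argument for $G$ (which likewise contains all $\Gamma_n$), I would simply invoke that the same reasoning applies unchanged, and remark that local nilpotency makes $\Gamma_\infty$ a more economical witness than $G$ for the nonlinearity of $\Aut(\C^2)$.
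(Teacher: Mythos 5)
Your strategy is the paper's own --- restrict a putative faithful representation to the subgroups $\Gamma_n$, use that they are torsion-free nilpotent of unbounded nilpotency length, and pass nonlinearity up to $\Aut(\C^2)$ --- but your ``careful verification'' of $\Gamma_n\subset\Gamma_{n+1}$ is a non sequitur, and this is a real gap. From $\alpha_{X^n}\in A_{n+1}$ you conclude that $\{s_1,\alpha_{X^n}\}$ is ``a subset of the generating set for $\Gamma_{n+1}$''; but the generating set of $\Gamma_{n+1}$ is $\{s_1,\alpha_{X^{n+1}}\}$, not $\{s_1\}\cup A_{n+1}$ --- the group generated by the latter is the group $R_{n+1}$ from the Lemma's proof. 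So your observation only gives $\Gamma_n\subset R_{n+1}$. Worse, the inclusion you are trying to justify is genuinely false for $n\ge 1$: every element of $\Gamma_{n+1}$ fixing the first coordinate has the form $\alpha_Q$ with $Q=\sum_k c_k(X-k)^{n+1}$, $c_k\in\Z$, and the coefficient of $X^n$ in such a $Q$ is $-(n+1)\sum_k c_k k$, hence divisible by $n+1$; thus $\alpha_{X^n}\notin\Gamma_{n+1}$ (for $n=1$: the $X$-coefficient of any integer combination of the $(X-k)^2$ is even). To be fair, the paper's own ``we easily see $\Gamma_n\subset\Gamma_{n+1}$'' has the same defect, so in particular $\bigcup_n\Gamma_n$ is not even obviously a group.

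The repair is cheap, and it is implicit in your own misstep: work with the increasing chain of the $R_n$ instead. Since $A_n\subset A_{n+1}$, the inclusion $R_n\subset R_{n+1}$ really is immediate, so $\Gamma_\infty:=\bigcup_n R_n$ is a group (equivalently, take the group generated by $s_1$ and all the $\alpha_{X^m}$, $m\ge 0$). Every finite subset of it lies in some $R_n$, which is nilpotent by the Lemma, so $\Gamma_\infty$ is locally nilpotent; it contains every $\Gamma_n$, so your restriction argument (unbounded nilpotency length in characteristic zero, torsion-freeness to exclude positive characteristic) applies verbatim; and it sits inside $\Aut(\C^2)$ because $s_1$ and the shears $\alpha_P$ are polynomial automorphisms. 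With this one substitution your proof is complete and coincides with the argument the paper intends.
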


With little further effort, it actually follows from the same argument that $\Gamma_\infty$ (and hence $G$) is not linear over any finite product of fields. Better, it is not linear over any product of fields (and therefore over any reduced commutative ring). This now relies on the fact that nilpotent subgroups of $\GL_d(K)$ for fixed $d$, have nilpotency length bounded independently of the characteristic of the field $K$ (see \cite{FN}).


\begin{rem}\label{autc2}
It is unknown whether there exists a finitely generated subgroup of the group $\Aut(\C^2)$ of {\em polynomial automorphisms} of $\C^2$, that is not linear in characteristic zero. A construction in the same fashion does not work: indeed let $E$ be the group of elementary automorphisms, namely of the form $(x,y)\mapsto (\alpha x+P(y),\beta y+c)$ for $(\alpha,\beta,c,P)\in\C^*\times\C^*\times\C\times\C[X]$. Then, although $E$ is not linear (since by the argument above, it contains all $\Gamma_n$), every finitely generated subgroup of $E$ is linear over $\C$.

To see this, write $E$ as a semidirect product $(\C^*\times(\C^*\ltimes \C))\ltimes \C[X]$, where the action on $\C[X]$ is by $(\alpha,\beta,c)\cdot P(X)=\alpha P(\beta X+c)$. In particular, this action stabilizes the subgroup $\C_n[X]$ of polynomials of degree at most $n$. Therefore any finitely generated subgroup of $E$ is contained in the subgroup $(\C^*\times(\C^*\ltimes \C))\ltimes \C_n[X]$ for some $n\ge 1$. This is a (finite-dimensional) complex Lie group whose center is easily shown to be trivial, so its adjoint representation is a faithful complex linear representation. 
\end{rem}

A nice observation by Cerveau and D\'eserti \cite[Lemme~5.2]{CD} is that the Cremona group has no faithful linear representation in characteristic zero. Actually, an easy refinement of the same argument provides a stronger result. 

\begin{prop}\label{nolr}
If $K$ is an algebraically closed field, there is no nontrivial finite-dimensional linear representation of $\Cre_2(K)$ over any field.
\end{prop}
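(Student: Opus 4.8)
The plan is to show that any homomorphism $\rho\colon\Cre_2(K)\to\GL_d(L)$ is trivial, reducing everything to the generators supplied by the Noether--Castelnuovo theorem: for $K$ algebraically closed, $\Cre_2(K)$ is generated by $\PGL_3(K)$ together with the standard quadratic involution $\sigma\colon(x:y:z)\mapsto(yz:zx:xy)$. First I would record that $\PGL_3(K)=\PSL_3(K)$ (every element of $K^\times$ is a cube, $K$ being algebraically closed) is a nonabelian simple group, so $\ker\rho\cap\PGL_3(K)$ is either trivial or everything. The whole game is then to exhibit a single nontrivial element of $\PGL_3(K)$ in $\ker\rho$, which forces $\rho|_{\PGL_3(K)}$ to be trivial, and finally to kill $\rho(\sigma)$.

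The heart of the argument, and the step I expect to carry the real content, is a toric input designed to work in all characteristics at once. Let $T\cong(K^\times)^2$ be the maximal torus of diagonal classes, realised inside $\Cre_2(K)$ as the open orbit torus; its normaliser in $\Cre_2(K)$ contains the monomial transformations $(x,y)\mapsto(x^ay^b,x^cy^d)$ for $\left(\begin{smallmatrix}a&b\\c&d\end{smallmatrix}\right)\in\GL_2(\Z)$, acting on $T$ through the tautological action of $\GL_2(\Z)$ on the character lattice $\Z^2$ of $T$. I would pick a hyperbolic element $\gamma\in\GL_2(\Z)$, say $\left(\begin{smallmatrix}2&1\\1&1\end{smallmatrix}\right)$, so that $1$ is not an eigenvalue of $\gamma$. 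Since $\rho(T)$ is a commuting family it is simultaneously triangularisable over $\overline{L}$, and its diagonal entries define finitely many homomorphisms $\chi_1,\dots,\chi_d\colon T\to\overline{L}^\times$ whose multiset records the eigenvalues of each $\rho(t)$. The relation $\rho(\gamma)\rho(t)\rho(\gamma)^{-1}=\rho(\gamma\cdot t)$, equality of eigenvalue multisets of conjugate matrices, and linear independence of distinct characters together show that $\{\chi_i\}$ is invariant under precomposition with $\gamma$. After passing to a power to make each $\chi_i$ individually $\gamma$-invariant, invariance means $\chi_i$ is trivial on the image of $\gamma-\mathrm{Id}$ acting on $T$; but $\det(\gamma-\mathrm{Id})\neq0$ and $K^\times$ is divisible, so that image is all of $T$. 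Hence every $\chi_i$ is trivial and $\rho(T)$ is unipotent.

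Once $\rho(T)$ is unipotent the rest is quick. Choosing a prime $\ell\notin\{\mathrm{char}\,K,\mathrm{char}\,L\}$, the torus $T$ contains a root of unity $t_0$ of order $\ell$, whose image is unipotent of order dividing $\ell$ in characteristic $\mathrm{char}\,L$; this forces $\rho(t_0)=\mathrm{Id}$. Thus $\ker\rho\cap\PGL_3(K)$ is nontrivial, and simplicity gives $\rho|_{\PGL_3(K)}=1$. By Noether--Castelnuovo, $\rho(\Cre_2(K))$ is then generated by the single involution $\rho(\sigma)$, so it remains only to show $\rho(\sigma)=1$; for this I would use that $\sigma$ is conjugate in $\Cre_2(K)$ to a linear involution $\iota\in\PGL_3(K)$ (equivalently, that $\sigma$ lies in the normal closure of $\PGL_3(K)$, so $\Cre_2(K)$ has no quotient of order two), whence $\rho(\sigma)=\rho(h)\,\rho(\iota)\,\rho(h)^{-1}=\mathrm{Id}$ for a conjugator $h$.

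The main obstacle is the uniformity of the toric step across characteristics: the argument must not presuppose that $\rho|_T$ is diagonalisable, algebraic, or even injective, which is why I route it through simultaneous triangularisation over $\overline{L}$ and through the purely group-theoretic triviality of the abstract characters $\chi_i$, with the divisibility of $K^\times$ doing the work that algebraicity would do in the classical Borel--Tits setting. A secondary point needing care is the final descent through $\sigma$: one must justify that $\sigma$ does not survive in the abelianisation of $\Cre_2(K)$, which I would extract from the known conjugacy of the standard quadratic involution to a projective involution rather than reprove from scratch.
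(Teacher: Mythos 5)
Your proposal is correct in substance but follows a genuinely different route from the paper's. Both proofs work inside the normalizer of the diagonal torus $T=(K^\times)^2$, and both finish identically: a nontrivial element of $\PGL_3(K)$ is shown to die, simplicity of $\PGL_3(K)$ then kills all of it, and normal generation of $\Cre_2(K)$ by $\PGL_3(K)$ --- which the paper simply asserts and which you correctly reduce to Noether--Castelnuovo plus the conjugacy of the quadratic involution to a linear one --- kills everything. The difference is the mechanism that puts a torus element in the kernel. The paper uses the \emph{unipotent} monomial map $\sigma(x,y)=(x,xy)$: for a prime $p$ invertible in $K$, the root-of-unity scalings $\alpha_p,\beta_p$ satisfy the Heisenberg relation $\sigma\alpha_p\sigma^{-1}\alpha_p^{-1}=\beta_p$ with $\beta_p$ central of order $p$ in the group they generate, and a lemma of Birkhoff shows $n>p$ whenever $\rho(\alpha_p)\neq 1$; taking $p$ larger than $n$ and the characteristics of $K$ and $F$ finishes. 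You use a \emph{hyperbolic} monomial map $\gamma$ instead and prove the stronger statement that $\rho(T)$ is unipotent, after which any torus element of prime order $\ell$ different from $\mathrm{char}\,K$ and $\mathrm{char}\,L$ must die. The paper's route is shorter and needs no character theory; yours is more structural (Borel--Tits--flavored rigidity of the torus under hyperbolic dynamics) and replaces Birkhoff's dimension bound by the divisibility of $K^\times$.

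One step of yours needs shoring up: in passing from ``equal eigenvalue multisets for every $t$'' to ``$\{\chi_i\}$ is stable under precomposition with $\gamma$'', linear independence of characters is not quite enough when $\mathrm{char}\,L=p>0$ and $p\le d$: comparing $\sum_i\chi_i$ with $\sum_i\chi_i\circ\gamma$ only determines multiplicities modulo $p$, since the coefficients are read in $\overline{L}$. Two standard fixes: (i) replace the triangularization by the canonical decomposition of $\overline{L}^d$ into generalized weight spaces $V_\chi=\{v: (\rho(t)-\chi(t)\Id)^d v=0\ \text{for all}\ t\in T\}$ of the commuting family $\rho(T)$; conjugation by $\rho(\gamma)$ visibly permutes these, sending $V_\chi$ to $V_{\chi\circ\gamma^{-1}}$, so the finite set of weights is $\gamma$-stable; or (ii) note that $T$, being divisible, has no proper finite-index subgroup, hence by B.~H.~Neumann's lemma is not a finite union of proper subgroups, so some $t^*$ separates any given character from the finitely many others, and evaluating the multiset equality at $t^*$ gives the stability. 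With either patch the rest of your argument is sound: $\det(\gamma^k-\Id)\neq 0$ for all $k\ge 1$ because $\gamma$ has no root-of-unity eigenvalue, divisibility of $K^\times$ makes $t\mapsto\gamma^k(t)\,t^{-1}$ surjective onto $T$, and a unipotent matrix whose order is prime to $\mathrm{char}\,L$ is the identity.
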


(Note that since the Cremona group is not simple by a recent difficult result of Cantat and Lamy \cite{CL}, the non-existence of a faithful representation does not formally imply the non-existence of a nontrivial representation.)

\begin{proof}[Proof of Proposition \ref{nolr}] In $\Cre_2(K)$, there is a natural copy of $G=(K^\times)^2\rtimes\Z$, where $\Z$ acts by the automorphism $\sigma(x,y)=(x,xy)$ of $(K^\times)^2$. Here, it corresponds, in affine coordinates, to the group of transformations of the form $$(x_1,x_2)\mapsto (\lambda_1x_1,x_1^n\lambda_2 x_2)\quad\text{for}\quad(\lambda_1,\lambda_2,n)\in (K^\times)^2\times\Z.$$ Consider an linear representation $\rho:G\to\GL_n(F)$, where $F$ is any field (here $G$ is viewed as a discrete group). If $p$ is a prime which is nonzero in $K$ and if $\omega_p\in K$ is a primitive $p$-root of unity, set $\alpha_p(x_1,x_2)=(\omega_px_1,\omega_px_2)$ and $\beta_p(x_1,x_2)=(x_1,\omega_px_2)$. Then $\sigma\alpha_p\sigma^{-1}\alpha_p^{-1}=\beta_p$ and commutes with both $\sigma$ and $\alpha_p$. An argument of Birkhoff \cite[Lemma~1]{Birk} shows that if $\rho(\alpha_p)\neq 1$ then $n>p$ (the short argument given in the proof of \cite[Lemme~5.2]{CD} for $F$ of characteristic zero works if it is assumed that $p$ is not the characteristic of $F$).

Picking $p$ to be greater than $n$ and the characteristics of $K$ and $F$, this shows that if we have an arbitrary representation $\pi:\Cre_2(K)\to\GL_n(F)$, the restriction of $\pi$ to $\PGL_3(K)$ is not faithful; since $\PGL_3(K)$ is simple, this implies that $\pi$ is trivial on $\PGL_3(K)$; since $\Cre_2(K)$ is generated by $\PGL_3(K)$ as a normal subgroup, this yields the conclusion.
\end{proof}

\end{document}